\numberwithin{equation}{section}
\newtheoremstyle{note}
{1em}
{1em}
{}
{}
{\bfseries}
{:}
{.5em}
{}
\newtheorem{theorem}{Theorem}[section]
\newtheorem{lemma}[theorem]{Lemma}
\newtheorem{proposition}[theorem]{Proposition}
\newtheorem{corollary}[theorem]{Corollary}
\theoremstyle{note}
\newtheorem{question}[theorem]{Question}
\newtheorem{definition}[theorem]{Definition}
\newtheorem*{acknowledgments}{Acknowledgments}
\newcommand{\N}{{\mathbb{N}}}
\newcommand{\R}{{\mathbb{R}}}
\newcommand{\C}{{\mathbb{C}}}
\newcommand{\K}{{\mathbb{K}}}
\newcommand{\n}[1]{ \left\|#1\right\| }
\newcommand{\OH}{\mathrm{OH}}
\newcommand{\U}{{\mathcal{U}}}
\DeclareMathOperator{\Rea}{Re}
\DeclareMathOperator{\cb}{cb}
\title{Completely coarse maps are $\R$-linear}
\author[B. M. Braga]{Bruno M. Braga}
\address[B. M. Braga]{University of Virginia, $141$ Cabell Drive, Kerchof Hall, P.O. Box $400137$, Charlottesville, USA}
\email{demendoncabraga@gmail.com}
\author[J. A. Ch\'avez-Dom\'inguez]{Javier Alejandro Ch\'avez-Dom\'inguez}
\address[J. A. Ch\'avez-Dom\'inguez]{Department of Mathematics, University of Oklahoma, Norman, OK 73019-3103,
USA} \email{jachavezd@ou.edu}
\thanks{The second-named author was partially supported by NSF grant DMS-1900985.}
\subjclass[2010]{Primary: 47L25, 46L07; Secondary: 46B80} 
\begin{document}

\maketitle

\begin{abstract}
A map between operator spaces is called completely coarse if the sequence of its aplifications is equi-coarse. We prove that all completely coarse maps  must be $\R$-linear.       On the opposite direction of this result, we introduce a notion of embeddability between operator spaces and show that this notion is  strictly weaker than complete $\R$-isomorphic embeddability (in particular, weaker than complete $\C$-isomorphic embeddability). Although weaker, this notion is strong enough for some applications. For instance, we show that if an infinite dimensional operator space $X$ embeds in this weaker sense into Pisier's operator space $\OH$, then $X$ must be completely isomorphic to $\OH$.  
\end{abstract}

\section{Introduction}

Throughout, $\K$ is either $\R$ or $\C$.
Recall that a \emph{(concrete) $\K$-operator space} is a closed subspace $X$ of  $B(H)$, where $H$ is a $\K$-Hilbert space and $B(H)$ denotes the $\mathrm{C}^*$-algebra of all bounded $\K$-linear operators on $H$.
This naturally induces a norm on each of the spaces $M_n(X)$ of $n \times n$ matrices with entries in $X$, inherited from the identification $M_n(B(H)) = B(H^n)$ where $H^n$ is the Hilbert space given by the direct sum of $n$ copies of $H$ endowed with the $\ell_2$-sum norm.
The theory of $\C$-operator spaces is by now very well-developed as a (noncommutative) quantization of Banach spaces, see e.g. \cite{Effros-Ruan-book,Pisier-OS-book}.
The theory for $\R$-operator spaces has not received as much attention, but a number of important results from the complex case also hold in the real one: see \cite{Ruan-real-OS,Ruan-complexifications,Sharma}.

If $X$ and $Y$ are $\K$-operator spaces and $f : X \to Y$ is a (not necessarily linear) function, its \emph{amplifications} are the functions $f_n : M_n(X) \to M_n(Y)$ that are defined by applying $f$ entry-wise.
The morphisms between $\K$-operator spaces are the \emph{completely bounded maps}, that is, $\K$-linear maps $f : X \to Y$ with finite \emph{completely bounded norm} $\n{f}_{\cb} := \sup_n \n{f_n}$.
Amplifications of nonlinear maps have also been considered for a long time: classical results of Schoenberg and Rudin \cite{Schoenberg,Rudin} characterize functions on $\R$ which are \emph{completely positive}, that is,  whose amplifications map positive-semidefinite matrices to positive-semidefinite matrices.

In the last 20 years or so, the  nonlinear   geometry of Banach spaces has increasingly attracted the attention of many Banach space theorists (see, e.g. the surveys \cite{KaltonNonlinear2008,Godefroy-Lancien-Zizler} and references therein),  hence  it is natural to search of an ``appropriate'' theory for  the nonlinear geometry of operator spaces. With this goal in mind, Sinclair together with the first named author   have considered amplifications of various types of nonlinear maps from a metric perspective  \cite{Braga-Sinclair}:
 a map $f : X \to Y$ between operator spaces is called \emph{completely Lipschitz (resp. completely uniformly continuous, completely coarse)} if the family of all its amplifications is equi-Lipschitz (resp. equi-uniformly continuous, equi-coarse), see Section \ref{section-preliminaries} for the precise definitions.
 
The objective of this   note is to show that the   approach  of \cite{Braga-Sinclair} does not  give rise to a genuine nonlinear theory of operator spaces. Precisely, the following is the main theorem of this paper.

\begin{theorem}\label{thm-completely-coarse-implies-R-linear}
Let $X$ and $Y$ be $\K$-operator spaces, and
let $f : X \to Y$ be completely coarse. If $f(0)=0$, then $f$ is $\R$-linear.
\end{theorem}

Therefore, if $\K=\R$, then the class of completely coarse maps is precisely the class of completely bounded operators and, if $\K=\C$, the complex structures of the operator spaces is the only restriction for  completely coarse maps to be linear.\footnote{The first named author   would like to say that since the new findings contained in these notes make \cite{Braga-Sinclair} obsolete, he and Sinclair   decided to leave  \cite{Braga-Sinclair} unpublished.}

Continuing the search for an ``appropriate'' framework for a theory of the  nonlinear geometry of operator spaces, in the second half of this paper, we introduce the notion of \emph{almost complete coarse embeddability} between $\K$-operator spaces and show that this is indeed  weaker than complete $\R$-isomorphic embeddability (see Definition   \ref{DefiAlmostCompCoarEmb} and Theorem  \ref{Prop62}). Although this notion is strictly weaker than   complete $\R$-isomorphic embeddability, it is not clear whether this is the ``correct'' weakening to look at. Precisely, although we show that  almost complete coarse embeddability is a strong enough notion in order to give us interesting applications (see Proposition  \ref{PropAlmostComplCoarseEmbImpliesRlinear} and Theorem \ref{CorOH}), we still do not know if this is an genuinely nonlinear notion (see Question \ref{Question1}). 

Nevertheless, not only we show that almost complete coarse embeddability  is strictly weaker than complete $\R$-isomorphic embeddability, but we also show that this notion of embeddability is strong enough to recover   complete $\R$-isomorphic results (which implies that almost complete coarse embeddability is a worthstudying notion).  For instance, we show that almost complete coarse embeddability between $\K$-operator spaces $X$ and $Y$ imply that $X$ completely $\R$-isomorphically embeds into any  ultrapower $Y^\N/\mathcal U$, for an nonprincipal ultrafilter $\mathcal U$ on $\N$ (see Proposition  \ref{PropAlmostComplCoarseEmbImpliesRlinear}). As an application,   we obtain that, despite the fact that recovering $\C$-linearity from a $\R$-linear map is usually not possible,\footnote{Recall,  Bourgain showed that  there are nonisomorphic $\C$-Banach spaces which are isomorphic as $\R$-Banach spaces \cite{Bourgain1986PAMS} and Ferenczi strengthened this result showing that  there are $\C$-Banach spaces which are $\R$-linearly isomorphically to each other but so that one does not $\C$-linearly embed into   the other \cite{Ferenczi2007Advances}.} the scenario is different if one considers Pisier's operator space $\mathrm{OH}(I)$. Precisely, we prove the following: 

  \begin{theorem}\label{CorOH}
Let $I$ be an index set. If a $\C$-operator space $X$ almost completely coarsely embeds into $\OH(I)$, then $X$ is completely $\C$-isomorphic to $\OH(J)$ for some index set $J$.
 \end{theorem}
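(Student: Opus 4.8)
The plan is to prove Theorem \ref{CorOH} by combining three ingredients: the structural rigidity of the operator Hilbert space $\OH$, the fact (Proposition \ref{PropAlmostComplCoarseEmbImpliesRlinear}) that almost complete coarse embeddability yields a complete $\R$-isomorphic embedding into an ultrapower of the target, and a self-improvement of $\R$-linear structure to $\C$-linear structure specific to $\OH(I)$. First I would invoke Proposition \ref{PropAlmostComplCoarseEmbImpliesRlinear} to reduce the hypothesis: since $X$ almost completely coarsely embeds into $\OH(I)$, the space $X$ completely $\R$-isomorphically embeds into an ultrapower $\OH(I)^\N/\mathcal U$. The crucial observation is that ultrapowers of $\OH(I)$ are themselves operator spaces of the form $\OH(K)$ for some index set $K$: indeed $\OH$ is characterized by the self-duality property $\OH(I)^* = \overline{\OH(I)}$ completely isometrically, and since $\OH$ spaces are stable under the formation of $\ell_2$-type ultraproducts, the ultrapower is again a $\C$-operator Hilbert space $\OH(K)$. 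This reduces matters to the case where $X$ completely $\R$-isomorphically embeds into $\OH(K)$.

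Next I would exploit the rigidity of $\OH$. The key point is that subspaces of $\OH(K)$ are completely isometric to $\OH(J)$ for the appropriate dimension $J$, because $\OH$ is homogeneous: every closed subspace of an operator Hilbert space is again an operator Hilbert space on the underlying Hilbert subspace. Thus a complete $\R$-isomorphic embedding of $X$ into $\OH(K)$ realizes $X$ as completely $\R$-isomorphic to a subspace of $\OH(K)$, hence $X$ is completely $\R$-isomorphic to some $\OH(J)$. The remaining and most delicate step is to upgrade this $\R$-linear conclusion to a $\C$-linear one, i.e. to show that $X$, as a $\C$-operator space, is in fact completely $\C$-isomorphic (not merely $\R$-isomorphic) to $\OH(J)$. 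Here the self-duality of $\OH$ does the work that Bourgain's and Ferenczi's examples show is impossible in general: $\OH(J)$ is the unique operator space that is completely isometric to its own complex conjugate via the canonical identification, and this extra symmetry forces any complete $\R$-isomorphism between $\C$-operator Hilbert spaces to respect the complex structure up to complete $\C$-isomorphism.

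The main obstacle I expect is precisely this last passage from $\R$ to $\C$. The difficulty is that a complete $\R$-isomorphism $T : X \to \OH(J)$ need not be $\C$-linear, and a priori its $\C$-antilinear part could be nontrivial. The plan to overcome this is to decompose $T = T_1 + T_2$ into its $\C$-linear and $\C$-antilinear components and analyze each on the operator space level: the antilinear part $T_2$ maps $X$ completely boundedly into the complex conjugate $\overline{\OH(J)}$, which by self-duality is completely isometric to $\OH(J)^*$. One then argues that at the level of Hilbert space dimensions — which is all that classifies $\OH$ spaces up to complete $\C$-isomorphism — the $\R$-isomorphism cannot change the relevant cardinal invariant, so the $\C$-linear dimension of $X$ matches that of some $\OH(J)$. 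Combining the dimension count with the homogeneity of $\OH$ then yields that $X$ is completely $\C$-isomorphic to $\OH(J)$, completing the proof. I would need to verify carefully that the formation of ultrapowers and the restriction to subspaces interact correctly with the complex structure, since this is exactly where the general Banach space obstructions live and where the special geometry of $\OH$ must be used decisively.
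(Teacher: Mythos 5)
Your first step matches the paper exactly: Proposition \ref{PropAlmostComplCoarseEmbImpliesRlinear} gives a complete $\R$-isomorphic embedding of $X$ into $\OH(I)^\N/\mathcal U$, and this ultrapower is completely $\C$-isometric to some $\OH(L)$ (the paper cites Pisier's Lemma 3.1(ii) for this). After that, however, your argument has a genuine gap in two places. First, you claim that a complete $\R$-isomorphic embedding of $X$ into $\OH(K)$ realizes $X$ as completely $\R$-isomorphic to some $\OH(J)$ ``because every closed subspace of an operator Hilbert space is again an operator Hilbert space.'' That homogeneity statement is about closed \emph{$\C$-linear} subspaces; the image of an $\R$-linear (non-$\C$-linear) embedding is only a real-linear subspace of $\OH(K)$, and nothing identifies such a subspace with an $\OH(J)$. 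Second, and more seriously, your passage from ``completely $\R$-isomorphic'' to ``completely $\C$-isomorphic'' rests on a ``dimension count'' plus an asserted rigidity (``this extra symmetry forces any complete $\R$-isomorphism \dots to respect the complex structure''). That assertion is essentially the entire content of the theorem and is exactly what Bourgain's and Ferenczi's examples show cannot be taken for granted; counting Hilbertian dimensions does not address it, since a priori $X$ is an arbitrary $\C$-operator space and the issue is whether its \emph{operator space} structure is the $\OH$ one, not what its dimension is.

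You do name the correct mechanism in passing --- decompose the $\R$-linear embedding $T$ into its $\C$-linear part $T_1$ and $\C$-antilinear part $T_2$, with $T_2$ viewed as a $\C$-linear map into $\overline{\OH(L)}$ --- but the point you are missing is how this is used. Neither $T_1$ nor $T_2$ alone need be an embedding; what works is the \emph{joint} map into the direct sum. Concretely (Proposition \ref{PropYoplusYBar}), $X$ embeds completely $\C$-isomorphically into its complexification $X_\C$ via $x\mapsto x+I(-ix)$, and $x+Iy\mapsto (T(x)+iT(y),\,T(x)-iT(y))$ is a complete $\C$-isomorphic embedding of $X_\C$ into $\OH(L)\oplus_2\overline{\OH(L)}$. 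One then needs Proposition \ref{PropOH}, that $\OH(L)\oplus_2\overline{\OH(L)}$ is completely $\C$-isometric to $\OH(L)$ --- this is where $\overline{\OH(L)}\cong\OH(L)$ and the self-duality/uniqueness characterization of $\OH$ are actually spent, to absorb the $\oplus_2$. Only at that point does $X$ sit inside $\OH(L)$ as a $\C$-linear subspace, and only then does Pisier's homogeneity yield $X\cong\OH(J)$ completely $\C$-isomorphically. Without the direct-sum step your outline cannot be completed as written.
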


In particular, the theorem above implies that if an infinite dimensional $\C$-operator space almost completely coarsely embeds into $\OH$, then it must be completely $\C$-isomorphic to $\OH$.

\section{Preliminaries}\label{section-preliminaries}

  Throughout this paper, either   $\K=\R$ or $\K=\C$. The reader can either follow our note with an unfixed  $\K$ in mind or pick their favorite field and proceed. We made this presentation choice since the paper deals in essence with when certain maps are automatically $\R$-linear and when certain nonlinear embeddability notions can be replaced by $\C$-linear notions. We will always be clear both when we are working with a specific choice of $\K$ or with an unfixed $\K$.
   
A subset of a $\K$-operator space is called a  \emph{$\K$-operator metric space}. Equivalently, $ E$ is a  $\K$-operator metric space if $ E$ is a subset of $ B(H)$, for some $\K$-Hilbert space $H$. Given  $n\in\N$, we consider  $M_n( E)$ with the   natural norm $\|\cdot\|_{M_n(E)}$  given by the canonical inclusion $M_n(E)\subset B(H^n)$ (in particular,  this norm defines	 a metric on   $M_n(E)$). For simplicity, we often  write $\|\cdot\|_n$ for $\|\cdot\|_{M_n(E)}$. Elements in $M_n(E)$ are denoted by $[x_{ij}]_{ij=1}^n$, or simply $[x_{ij}]_{ij}$ (or even $[x_{ij}]$).

\subsection{Nonlinear maps between $\K$-operator spaces}
If $ E$ and $ F$ are $\K$-operator metric spaces and  $f: E\to  F$ is a map, we can still consider the amplifications $f_n:M_n( E)\to M_n( F)$.
The map $f$ is a \emph{complete isometry}, or a \emph{complete isometric embedding}, if $f_n$ is an isometry for all $n\in\N$, and $ E$ and $ F$ are called \emph{completely isometric} if there exists a bijective complete isometry $ E\to  F$.

Recall, if  $(A,d_A)$ and $(B,d_B)$ are metric spaces, and  $f: A\to B$ is a map,   its \emph{modulus of uniform continuity}  is given by  \[\Delta_f(t) =\sup\{d_B(f(x), f(y)) \mid d_A(x,y)\leq t\}\]
for all $t\geq 0$. When working with $\K$-operator spaces, \cite{Braga-Sinclair} has introduced a complete version of this modulus:
given  $\K$-operator metric spaces $ E$ and $ F$ and a map $f:  E\to  F$, we define its \emph{complete modulus of uniform continuity} by 
\[
\Delta^{cb}_f(t) = \sup_{n\in \N} \Delta_{f_n}(t)
\]
for all $t\geq 0$.
Given  $\K$-operator metric spaces $ E$ and $ F$, and a map $f:  E\to  F$, we say that $f$ is:
\begin{enumerate}[(a)]
\item \emph{completely Lipschitz} if   $\sup_{t>0}\Delta^{\cb}_f(t)/t<\infty$,
 
\item \emph{completely uniformly continuous} if $\lim_{t\to 0}\Delta^{\cb}_f(t)=0$, 
\item \emph{completely coarse} if for all $t>0$ it holds that $\Delta^{\cb}_f(t)<\infty$,
\item a \emph{complete Lipschitz embedding} if $f^{-1}:\mathrm{Im}(f(E))\to E$ exists and both $f$ and $f^{-1}$ are  completely Lipschitz,
\item a \emph{complete uniformly continuous  embedding} if $f^{-1}:\mathrm{Im}(f(E))\to E$ exists and both $f$ and $f^{-1}$ are  completely uniformly continuous,
\end{enumerate}
Moreover, we say that maps  $f,g:E\to F$ are \emph{completely close} if 
\[\sup_{n\in\N}\sup_{x\in X}\|f_n(x)-g_n(x)\|_{M_n(F)}<\infty\]
and we say that $f$ is:
\begin{enumerate}[(a)]
\item[(f)] a \emph{complete coarse embedding} if $f$ is completely coarse and  there is  a completely coarse map $h:\mathrm{Im}(f(E))\to E$ so that  $h\circ f$ and  $f\circ h$ are completely close to $\mathrm{Id}_E$ and $\mathrm{Id}_{f(E)}$, respectively.
\end{enumerate}


The following is elementary and it is the operator space version of \cite[Lemma 	1.4]{KaltonNonlinear2008}.

\begin{proposition}\label{Prop1}
Let $H$ be a $\K$-Hilbert space, and $E$ and $F$ be $\K$-vector subspaces of $B(H)$. Then a map  $f : E \to F$  is   completely
coarse if and only if  there is $C>0$ so that 
\[\|f([x_{ij}])-f([y_{ij}])\|_n\leq C\|[x_{ij}]-[y_{ij}]\|_n+C\]
for all $n\in\N$ and all $[x_{ij}]\in M_n(E)$. Moreover, if $f$ is completely uniformly continuous, then $f$ is completely coarse.\qed
\end{proposition}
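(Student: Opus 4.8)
The plan is to prove the two directions of the equivalence separately and then deduce the ``moreover'' clause from the forward implication. Throughout I write $f_n$ for the $n$-th amplification, so that $\n{f([x_{ij}])-f([y_{ij}])}_n$ means $\n{f_n([x_{ij}])-f_n([y_{ij}])}_n$.

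The easy direction is the ``if'' part. Suppose such a $C>0$ exists. Then for every $n\in\N$ and every pair $[x_{ij}],[y_{ij}]\in M_n(E)$ with $\n{[x_{ij}]-[y_{ij}]}_n\leq t$, the hypothesis immediately gives $\n{f_n([x_{ij}])-f_n([y_{ij}])}_n\leq Ct+C$. Taking the supremum over all such pairs and then over $n$ yields $\Delta^{\cb}_f(t)\leq Ct+C<\infty$ for every $t>0$, which is exactly the definition of complete coarseness.

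For the ``only if'' part, the key idea is a chaining argument carried out inside the vector space $M_n(E)$. First I would observe that, since $E$ is a $\K$-vector subspace of $B(H)$, each $M_n(E)$ is a $\K$-vector space, so the line segment joining any two matrices $[x_{ij}],[y_{ij}]\in M_n(E)$ lies entirely in $M_n(E)$, and $f_n$ is defined on all of it by entrywise application of $f$. Writing $s=\n{[x_{ij}]-[y_{ij}]}_n$ and $N=\lceil s\rceil$, I would subdivide this segment into $N$ equal pieces, producing points $z_0=[x_{ij}],z_1,\dots,z_N=[y_{ij}]$ in $M_n(E)$ with $\n{z_k-z_{k+1}}_n=s/N\leq 1$ for each $k$. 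The triangle inequality then gives
\[
\n{f_n([x_{ij}])-f_n([y_{ij}])}_n\leq \sum_{k=0}^{N-1}\n{f_n(z_k)-f_n(z_{k+1})}_n\leq N\,\Delta_{f_n}(1)\leq N\,\Delta^{\cb}_f(1).
\]
Since $f$ is completely coarse, $\Delta^{\cb}_f(1)<\infty$, so setting $C:=\max\{\Delta^{\cb}_f(1),1\}$ and using $N\leq s+1$ produces the required inequality, uniformly in $n$. Finally, for the ``moreover'' clause I would note that complete uniform continuity, i.e. $\lim_{t\to 0}\Delta^{\cb}_f(t)=0$, guarantees some $\delta>0$ with $\Delta^{\cb}_f(\delta)<\infty$; repeating the same chaining with pieces of length at most $\delta$ shows $\Delta^{\cb}_f(t)\leq \lceil t/\delta\rceil\,\Delta^{\cb}_f(\delta)<\infty$ for every $t>0$, so $f$ is completely coarse.

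The whole argument is elementary, and the only point requiring a moment's care is the uniformity in $n$: the same constant $C$ must serve every amplification at once. This is exactly what the passage from $\Delta_{f_n}(1)$ to $\Delta^{\cb}_f(1)=\sup_m\Delta_{f_m}(1)$ achieves, and it is also the reason the hypothesis that $E$ be a vector subspace (so that the chaining stays inside $M_n(E)$ for every $n$ simultaneously) cannot be dropped — for a general $\K$-operator metric space the intermediate points $z_k$ need not belong to the domain.
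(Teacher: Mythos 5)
Your proof is correct and is exactly the argument the paper has in mind: the proposition is stated with its proof omitted as elementary, being the operator space version of \cite[Lemma 1.4]{KaltonNonlinear2008}, whose standard proof is precisely your chaining argument along line segments, with the only operator-space-specific point being the uniformity in $n$ obtained by passing from $\Delta_{f_n}(1)$ to $\Delta^{\cb}_f(1)$, which you handle correctly. Your closing remark about why $E$ must be a $\K$-vector subspace (so that the chain of intermediate points stays in the domain of every amplification) is also the right observation.
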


\subsection{Hadamard matrices} Recall that a Hadamard matrix is a square matrix whose entries are either $+1$ or $-1$ and whose rows are mutually orthogonal.
It is easy to see that an $n\times n$ Hadamard matrix has norm $\sqrt{n}$, 
since dividing the matrix by $\sqrt{n}$ yields an orthogonal matrix.
It can be shown that Hadamard matrices of arbitrarily large sizes do exist, for example by using Sylvester's construction:
define
\[
A_2 = \begin{pmatrix} 1 &1\\ 1 &-1\end{pmatrix}, \qquad A_{2^{k+1}} =   \begin{pmatrix} A_{2^k} &A_{2^k}\\ A_{2^k} &-A_{2^k}\end{pmatrix} = A_2 \otimes A_{2^k}.
\]

We denote the $n\times n$ matrix all of whose entries are $1$ by $\mathbbm{1}_n$.
It is easy to see that $\n{\mathbbm{1}_n} \ge n$, 
by applying $\mathbbm{1}_n$ to a vector of all 1's.

\subsection{Obtaining linearity} The following lemma is what will allow us to conclude linearity.
The proof is a standard functional equation argument as in, e.g. \cite[Chap. 11]{Engel}, but we provide a sketch for completeness.

\begin{lemma}\label{lemma-functional-equation}
Let $X$ be a normed $\R$-vector space, and let $f : X \to X$ be a function such that $f(0)=0$, $f$ is bounded on a neighborhood of $0$, and $f\big(\tfrac{1}{2}(x+y) \big) = \tfrac{1}{2}\big( f(x)+f(y) \big)$ for all $x,y \in X$. Then $f$ is $\R$-linear.
\end{lemma}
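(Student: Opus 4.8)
The plan is to run the classical Jensen-to-Cauchy functional equation argument and then upgrade rational linearity to full $\R$-linearity using the local boundedness hypothesis. First I would exploit the normalization $f(0)=0$: setting $y=0$ in the midpoint identity gives $f(\tfrac12 x)=\tfrac12 f(x)$ for all $x\in X$. Substituting this back, but now applied to the point $x+y$, converts the hypothesis $f\big(\tfrac12(x+y)\big)=\tfrac12\big(f(x)+f(y)\big)$ into $\tfrac12 f(x+y)=\tfrac12\big(f(x)+f(y)\big)$, i.e. the Cauchy equation $f(x+y)=f(x)+f(y)$. Thus $f$ is additive.

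Next I would extract $\Q$-linearity purely formally from additivity, with no use of the topology. Induction gives $f(nx)=nf(x)$ for $n\in\N$, while $f(x)+f(-x)=f(0)=0$ gives $f(-x)=-f(x)$, so $f(mx)=mf(x)$ for every integer $m$. For a rational $q=m/k$ one then writes $k\,f(qx)=f(kqx)=f(mx)=mf(x)$, hence $f(qx)=qf(x)$. Therefore $f$ is $\Q$-linear.

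Finally comes the only genuinely non-formal step, the passage to $\R$-linearity, where the local boundedness enters. Suppose $\n{f(u)}\le M$ whenever $\n{u}\le\delta$. For $x$ with $\n{x}\le\delta/n$ we have $\n{nx}\le\delta$, so $n\,\n{f(x)}=\n{f(nx)}\le M$, giving $\n{f(x)}\le M/n$; letting $n\to\infty$ as $x\to 0$ shows $f$ is continuous at $0$, and additivity propagates this everywhere via $f(x)-f(x_0)=f(x-x_0)$. Continuity combined with $\Q$-linearity then yields $\R$-linearity: for $\lambda\in\R$, choose rationals $q_k\to\lambda$ and use $f(\lambda x)=\lim_k f(q_k x)=\lim_k q_k f(x)=\lambda f(x)$.

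I expect the crux to be precisely this last upgrade from $\Q$-linearity to $\R$-linearity, since without some regularity assumption additive maps can be highly pathological (Hamel-basis constructions). The role of the boundedness hypothesis is exactly to rule this out, and the work reduces to the short squeezing estimate $\n{f(x)}\le M/n$ above, which forces continuity at the origin; everything else is routine.
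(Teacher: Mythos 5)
Your proof is correct. The first half (Jensen to Cauchy via $f(0)=0$, hence additivity and integer homogeneity) coincides with the paper's argument, but the final upgrade to $\R$-homogeneity is carried out differently. You establish $\Q$-linearity, then use the squeezing estimate $\n{f(x)}\le M/n$ for $\n{x}\le \delta/n$ to get continuity at $0$, propagate continuity by additivity, and pass to real scalars by density of $\Q$. The paper instead fixes $x$ in the ball where $f$ is bounded, considers $g_x(t)=f(tx)-tf(x)$, and observes that $g_x$ is additive in $t$, periodic with period $1$, and bounded on $[-1,1]$, hence bounded on all of $\R$; since $g_x(nt_0)=ng_x(t_0)$, any nonzero value would contradict boundedness, so $g_x\equiv 0$. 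The two mechanisms are equivalent in strength here: yours is the more standard textbook route and makes the role of local boundedness transparent (it kills the Hamel-basis pathologies by forcing continuity), while the paper's periodicity trick reaches $f(tx)=tf(x)$ directly without ever mentioning continuity or rational approximation. Both are complete; there is no gap in your argument.
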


\begin{proof}
Setting $y=0$ yields $f(x/2)= f(x)/2$.
It follows that $f(x+y)=f(x)+f(y)$ for any $x,y\in X$,
i.e. $f$ is additive.
Setting $y=x$ yields $f(2x) = 2f(x)$; by induction we get $f(nx) = nf(x)$ for all $x\in X$ and $n\in \N$.
Let $r,M>0$ be such that $\n{f(x)} \le M$ whenever $\n{x}\le r$.
Fix $x \in X$ with $\n{x} \le r$, and define $g_x : \R \to X$ by $g_x(t) = f(tx)-tf(x)$.
Note that $g_x$ is bounded by $2M$ on $[-1,1]$.
Moreover, by the additivity of $f$ we have
$g_x(t+1) = f(tx+x)-(t+1)f(x) = g_x(t)$, i.e. $g_x$ is periodic with period 1 and hence $g_x$ is bounded. If there existed a $t_0\not=0$ such that $g_x(t_0) \not=0$, it would follow that the sequence given by $g_x(nt_0) = ng_x(t_0)$, $n\in\N$ is unbounded, a contradiction. Therefore, $f(tx)=tf(x)$ for all $t\in \R$,
and thus we conclude $f$ is $\R$-linear.
\end{proof}

\subsection{Miscellaneous facts on operator theory} The following Proposition is well-known in the complex case, and the real one can be proved similarly:
it is an easy consequence of the fact that for a matrix  $(a_{ij}) \in M_n(\K)$,
\begin{equation}\label{eqn-norm-of-matrix}
\n{(a_{ij})}_{M_n(\K)} = \sup \left\{ \bigg| \sum_{i,j=1}^n a_{ij} w_i z_j \bigg|  \,:\, w, z \in \K^n, \n{w}_{\ell_2^n},\n{z}_{\ell_2^n} \le 1  \right\}.    
\end{equation}

\begin{proposition}
Let $X$ be an $\R$-operator space, and $\phi : X \to \R$ a continuous $\R$-linear functional. Then $\phi$ is completely bounded.\qed
\end{proposition}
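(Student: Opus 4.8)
The plan is to prove the stronger quantitative statement $\n{\phi}_{\cb} = \sup_n \n{\phi_n} \le \n{\phi}$, which in particular gives complete boundedness. The starting point is the norm formula \eqref{eqn-norm-of-matrix}. Since amplifications act entrywise, for any $[x_{ij}] \in M_n(X)$ we have $\phi_n([x_{ij}]) = [\phi(x_{ij})] \in M_n(\R)$, so applying \eqref{eqn-norm-of-matrix} with $a_{ij} = \phi(x_{ij})$ gives
\[
\n{\phi_n([x_{ij}])}_{M_n(\R)} = \sup\Big\{ \Big| \sum_{i,j=1}^n \phi(x_{ij}) w_i z_j \Big| : w,z \in \R^n,\ \n{w}_{\ell_2^n}, \n{z}_{\ell_2^n} \le 1 \Big\}.
\]
The whole task is then to estimate the inner quantity for fixed $w,z$.

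Since the coefficients $w_i z_j$ are real scalars and $\phi$ is $\R$-linear, I would first pull $\phi$ outside the sum, $\sum_{i,j} \phi(x_{ij}) w_i z_j = \phi\big( \sum_{i,j} w_i z_j x_{ij} \big)$, so that
\[
\Big| \sum_{i,j} \phi(x_{ij}) w_i z_j \Big| \le \n{\phi}\, \Big\| \sum_{i,j} w_i z_j x_{ij} \Big\|_{B(H)}.
\]
This reduces everything to a single operator-norm estimate in $B(H)$, namely bounding $\big\| \sum_{i,j} w_i z_j x_{ij} \big\|$ by $\n{[x_{ij}]}_n = \n{[x_{ij}]}_{M_n(X)}$.

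The key step is to realize the element $\sum_{i,j} w_i z_j x_{ij}$ as a compression of the matrix $[x_{ij}] \in B(H^n)$. Introduce the column operators $Z : H \to H^n$, $Z\xi = (z_1 \xi, \dots, z_n \xi)$, and likewise $W$ built from $w$; a direct computation of the adjoint (using that the $w_i$ are real) shows that $W^* [x_{ij}] Z = \sum_{i,j} w_i z_j x_{ij}$, while $\n{W} = \n{w}_{\ell_2^n} \le 1$ and $\n{Z} = \n{z}_{\ell_2^n} \le 1$. Hence $\big\| \sum_{i,j} w_i z_j x_{ij} \big\| \le \n{W}\, \n{[x_{ij}]}_n\, \n{Z} \le \n{[x_{ij}]}_n$. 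Combining the three displays yields $\n{\phi_n([x_{ij}])}_{M_n(\R)} \le \n{\phi}\, \n{[x_{ij}]}_n$ uniformly in $n$, and the proposition follows.

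This is entirely routine in the complex case, being the standard fact that bounded functionals on operator spaces are automatically completely bounded with the same norm; the only point requiring care is that the argument survives over $\R$. Concretely, the one thing to verify is that formula \eqref{eqn-norm-of-matrix} and the adjoint identity $W^*[x_{ij}]Z = \sum_{i,j} w_i z_j x_{ij}$ remain valid with real scalars, which they do precisely because $\overline{w_i} = w_i$. I therefore expect no genuine obstacle, only this bookkeeping of the real structure.
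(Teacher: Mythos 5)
Your proof is correct and is precisely the argument the paper has in mind: the paper leaves the proposition as ``an easy consequence'' of the norm formula \eqref{eqn-norm-of-matrix}, and your compression argument via the row and column contractions $W$ and $Z$ is the standard way to extract the bound $\n{\phi_n([x_{ij}])}_{M_n(\R)} \le \n{\phi}\,\n{[x_{ij}]}_n$ from that formula. Your attention to the real-scalar bookkeeping ($\overline{w_i}=w_i$) is exactly the point the paper alludes to when it says the real case ``can be proved similarly.''
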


Finally, a small remark regarding certain vector-valued matrices:
if $X \subseteq B(H)$ is a $\K$-operator space, $x \in X$, and $A \in M_n(\K)$,
then
\[
\n{ A \otimes x  }_{M_n(X)} = \n{A}_{M_n(\K)} \n{x}_X.
\]

\section{The main result}

We prove Theorem \ref{thm-completely-coarse-implies-R-linear} in this section.  In order to illustrate the strategy of our proof, we take a moment to show an example that encapsulates the essence of the argument.

\begin{proposition}\label{prop-abs-value}
Let $f : \K \to \K$ be given by $f(x) = |x|$. Then $f$ is not completely Lipschitz.
\end{proposition}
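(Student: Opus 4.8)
The plan is to exploit the fact that the amplification $f_n$ acts entrywise, sending $[x_{ij}] \mapsto [\,|x_{ij}|\,]$, together with the norm mismatch between a Hadamard matrix and the all-ones matrix. The crucial observation is that if we feed $f_n$ a matrix with all entries of modulus $1$, it collapses all the sign (or phase) information and returns $\mathbbm{1}_n$, which has a much larger norm relative to its size than a Hadamard matrix does. Since being completely Lipschitz demands an equi-Lipschitz bound \emph{uniform in $n$}, I expect this distortion to blow up as $n \to \infty$.

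Concretely, I would first invoke Sylvester's construction to fix, for each $k$, a Hadamard matrix $A_{2^k} \in M_{2^k}(\K)$; as recorded above, $\n{A_{2^k}}_{2^k} = \sqrt{2^k}$. Applying $f_{2^k}$ entrywise and using $|\pm 1| = 1$ gives $f_{2^k}(A_{2^k}) = \mathbbm{1}_{2^k}$, and we have the lower bound $\n{\mathbbm{1}_{2^k}}_{2^k} \ge 2^k$. I would then compare against the basepoint $0$, where $f_{2^k}(0) = 0$, and estimate the ratio
\[
\frac{\n{ f_{2^k}(A_{2^k}) - f_{2^k}(0) }_{2^k}}{\n{ A_{2^k} - 0 }_{2^k}} = \frac{\n{\mathbbm{1}_{2^k}}_{2^k}}{\n{A_{2^k}}_{2^k}} \ge \frac{2^k}{\sqrt{2^k}} = \sqrt{2^k}.
\]

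From this it follows that $\Delta_{f_{2^k}}\big(\sqrt{2^k}\big) \ge 2^k$, and hence $\Delta^{\cb}_f\big(\sqrt{2^k}\big) \ge 2^k$ as well. Therefore $\Delta^{\cb}_f\big(\sqrt{2^k}\big)/\sqrt{2^k} \ge \sqrt{2^k}$, which tends to infinity with $k$, so $\sup_{t>0}\Delta^{\cb}_f(t)/t = \infty$ and $f$ is not completely Lipschitz. The only point requiring care is conceptual rather than computational: at the scalar level $f$ \emph{is} $1$-Lipschitz (since $\big||x|-|y|\big| \le |x-y|$), so the failure is genuinely a ``completely'' phenomenon, driven by the supremum over all amplification levels $n$; the main technical input is simply the existence of arbitrarily large Hadamard matrices, which Sylvester's construction supplies.
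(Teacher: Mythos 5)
Your proof is correct and follows essentially the same route as the paper: both apply $f_{2^k}$ to the Sylvester Hadamard matrices $A_{2^k}$, use $f_{2^k}(A_{2^k}) = \mathbbm{1}_{2^k}$ together with $\n{A_{2^k}} = \sqrt{2^k}$ and $\n{\mathbbm{1}_{2^k}} \ge 2^k$, and conclude that the Lipschitz constants of the amplifications grow like $\sqrt{2^k}$. Your version merely spells out the comparison with the basepoint $0$ and the resulting lower bound on $\Delta^{\cb}_f$, which the paper leaves implicit.
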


\begin{proof}
Consider Hadamard matrices $\big(A_{2^k}\big)_{k=1}^\infty$ as above.
Observe that $f_{2^k}\big(A_{2^k}\big) = \mathbbm{1}_{2^k}$.
Since $\n{A_{2^k}} = \sqrt{2^k}$ and $\n{\mathbbm{1}_{2^k}} \ge 2^k$, it follows that the Lipschitz constant of $f_{2^k}$ is at least $\sqrt{2^k}$; therefore, $f$ is not completely Lipschitz.
\end{proof}

We now prove our main result.

\begin{proof}[Proof of Theorem \ref{thm-completely-coarse-implies-R-linear}]
Let $X$ and $Y$ be $\K$-operator spaces and $f:X\to Y$ be a completely coarse map so that $f(0)=0$.  Fix $x_0\in X$ and $h\in X\setminus\{0\}$.
Let $\phi : X \to \K$ be a $\K$-linear continuous functional such that $\phi(h)=1$, and set $y_0 = \big[ f(x_0-h)-f(x_0+h) \big]/2$.
Define $g:X\to Y$ by $g(x)= f(x) + \phi(x)y_0$.
Observe that $g(x_0 +h) = g(x_0 -h)$, and $g$ is completely coarse, because so are $f$ and $\phi$.
By Proposition \ref{Prop1}, there exists a constant $C>0$ such that for any $n\in\N$ and  $[x_{ij}]_{ij},[y_{ij}]_{ij} \in M_n(X)$,
\begin{equation}\label{eqn-coarse-Lip}
    \n{ \big[ g(x_{ij})-g(y_{ij}) \big]_{ij} }_{M_n(Y)} \le C \n{ [x_{ij}-y_{ij}]_{ij}}_{M_n(X)} + C.
\end{equation}

Once again, consider Hadamard matrices $ (A_{2^k} )_{k=1}^\infty$ as above.
For each $k\in\N$, write $A_{2^k} =  [ a^k_{i,j}  ]_{i,j=1}^{2^k}$.
Consider the matrices in $M_{2^k}(X)$ given by
\[
\mathbbm{1}_{2^k}\otimes x_0 +  A_{2^k} \otimes h =  \big[ x_0 + ha^k_{ij} \big]_{i,j=1}^{2^k}, \qquad \mathbbm{1}_{2^k} \otimes x_0 =  \big[ x_0 \big]_{i,j=1}^{2^k}.
\]
By \eqref{eqn-coarse-Lip},
\[
\n{ \big[ g(x_0 + ha^k_{ij})-g(x_0) \big]_{i,j=1}^{2^k} }_{M_{2^k}(Y)} \le C \n{ A_{2^k} \otimes h }_{M_{2^k}(X)} + C.
\]
Since $g(x_0 +h) = g(x_0 -h)$, this means
\[
\n{g(x_0+h)-g(x_0)} \cdot\n{ \mathbbm{1}_{2^k} }_{M_{2^k}(\K)} \le C \cdot \n{h} \cdot \n{ A_{2^k} }_{M_{2^k}(\K)} + C,
\]
which yields
\[
\n{g(x_0+h)-g(x_0)}2^{k} \le C \n{h} \sqrt{2^k} + C.
\]
Since this must hold for all $k\in\N$, we conclude that  $g(x_0+h)-g(x_0)=0$, so
$g(x_0) = g(x_0+h)$.  That is,
$f(x_0)+\phi(x_0)y_0 = f(x_0+h) + \phi(x_0+h)y_0$, from where
\[
f(x_0) = f(x_0+h) +y_0 = f(x_0+h) + \frac{f(x_0-h)-f(x_0+h)}{2},
\]
which implies $f(x_0) = \tfrac{1}{2}\big( f(x_0+h)+f(x_0-h) \big)$.
This means that $f$ satisfies the conditions of Lemma \ref{lemma-functional-equation} (note that since $f$ is completely coarse, in particular it is bounded on a neighborhood of $0$), and therefore $f$ is $\R$-linear.
\end{proof}

Notice that  complete coarseness is a weaker property than both complete uniform continuity and complete Lipschitzness (see Proposition \ref{Prop1}). Therefore,  we can    get the following corollary of Theorem \ref{thm-completely-coarse-implies-R-linear}.

\begin{corollary}
Let $X$ and $Y$ be $\K$-operator spaces, and $f:X\to Y$ be a map with $f(0)=0$. The following are equivalent:

\begin{enumerate}
\item\label{Item1} $f$ is a  complete coarse  embedding,
\item $f$ is a  complete uniformly continuous   embedding,
\item $f$ is a  complete Lipschitz  embedding, and 
\item\label{Item4} $f$ is a  complete $\R$-isomorphic embedding.
\end{enumerate}
\end{corollary}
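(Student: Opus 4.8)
The plan is to prove the cycle of implications $(4)\Rightarrow(3)\Rightarrow(2)\Rightarrow(1)\Rightarrow(4)$. The first three are soft and follow from the relationships among the three completeness moduli together with Proposition \ref{Prop1}; the last, $(1)\Rightarrow(4)$, carries all the content and is where Theorem \ref{thm-completely-coarse-implies-R-linear} enters. For $(4)\Rightarrow(3)$, note that for any $\R$-linear map $T$ one has $\Delta_{T_n}(t)=t\n{T_n}$, so $\Delta^{\cb}_T(t)=t\n{T}_{\cb}$; thus complete boundedness of $f$ and of $f^{-1}\colon f(X)\to X$ makes both completely Lipschitz, i.e. $f$ is a complete Lipschitz embedding. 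For $(3)\Rightarrow(2)$, a completely Lipschitz map satisfies $\Delta^{\cb}(t)\le Lt$, hence $\lim_{t\to 0}\Delta^{\cb}(t)=0$; applying this to $f$ and $f^{-1}$ gives a complete uniformly continuous embedding. For $(2)\Rightarrow(1)$, Proposition \ref{Prop1} shows that $f$ and $f^{-1}$ are completely coarse, and taking $h=f^{-1}$ the compositions $h\circ f$ and $f\circ h$ are literally the identities on $X$ and $f(X)$, hence trivially completely close to them, so $f$ is a complete coarse embedding.

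The substantive implication is $(1)\Rightarrow(4)$. Suppose $f$ is a complete coarse embedding, witnessed by a completely coarse map $h\colon f(X)\to X$ with $h\circ f$ completely close to $\mathrm{Id}_X$. Since $f$ is completely coarse with $f(0)=0$, Theorem \ref{thm-completely-coarse-implies-R-linear} gives that $f$ is $\R$-linear; in particular $f(X)$ is an $\R$-subspace of $Y$, and by the identity $\Delta^{\cb}_f(t)=t\n{f}_{\cb}$ the finiteness of $\Delta^{\cb}_f(1)$ forces $\n{f}_{\cb}<\infty$. It then remains to produce a uniform lower bound $\n{f_n(x)}_n\ge c\n{x}_n$ valid for all $n$ and all $x\in M_n(X)$; this simultaneously yields injectivity of $f$ and complete boundedness of $f^{-1}$, completing the identification of $f$ as a complete $\R$-isomorphic embedding.

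To obtain the lower bound I would proceed as follows. Let $D>0$ bound $\n{(h\circ f)_n(x)-x}_n$ for all $n$ and $x$, and let $C>0$ be the constant furnished by Proposition \ref{Prop1} for the completely coarse map $h$, so that $\n{h_n(a)-h_n(b)}_n\le C\n{a-b}_n+C$; since $0_n=f_n(0_n)$ lies in $M_n(f(X))$, complete closeness also gives $\n{h_n(0_n)}_n\le D$. Writing $y=f_n(x)$ and combining these estimates yields
\[
\n{x}_n \le \n{h_n(f_n(x))}_n + D \le C\n{f_n(x)}_n + (C+2D),
\]
that is, $\n{f_n(x)}_n\ge C^{-1}\n{x}_n-C^{-1}(C+2D)$. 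The essential step is now to exploit homogeneity: replacing $x$ by $\lambda x$ with $\lambda>0$ and using $\R$-linearity of $f_n$ turns this into $\n{f_n(x)}_n\ge C^{-1}\n{x}_n-\lambda^{-1}C^{-1}(C+2D)$, and letting $\lambda\to\infty$ removes the additive constant, leaving $\n{f_n(x)}_n\ge C^{-1}\n{x}_n$ uniformly in $n$. Equivalently $f^{-1}$ is $\R$-linear with $\n{f^{-1}}_{\cb}\le C$, which together with $\n{f}_{\cb}<\infty$ gives $(4)$.

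The main obstacle is exactly this passage from the coarse lower bound, which holds only up to an additive constant, to a genuine homogeneous lower bound: it is the $\R$-linearity supplied by Theorem \ref{thm-completely-coarse-implies-R-linear} that powers the scaling argument, and without it there would be no mechanism to absorb the additive error. A minor point to verify is that Proposition \ref{Prop1} is being applied to $h$ over the real field; this is legitimate because $f(X)$ is an $\R$-subspace of $Y$ and the interpolation argument underlying Proposition \ref{Prop1} uses only convexity of the domain, so the real structure suffices for every estimate above.
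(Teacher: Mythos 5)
Your proposal is correct, and the soft implications $(4)\Rightarrow(3)\Rightarrow(2)\Rightarrow(1)$ are handled just as in the paper (which simply cites the hierarchy of the moduli and Proposition \ref{Prop1}). For the substantive implication $(1)\Rightarrow(4)$ you take a genuinely different route. The paper applies Theorem \ref{thm-completely-coarse-implies-R-linear} \emph{twice}: once to $f$ and once to the coarse inverse $h\colon f(X)\to X$ (justified by the observation that the theorem only needs $\R$-vector subspaces of $B(H)$, not norm-closed ones), and then concludes from the $\R$-linearity of $h\circ f$ and $f\circ h$ together with their complete closeness to the identities that these compositions \emph{equal} the identities, whence $f$ is a complete $\R$-isomorphic embedding. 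You apply the theorem only to $f$, and instead upgrade the coarse lower bound $\n{x}_n\le C\n{f_n(x)}_n+(C+2D)$ to the homogeneous bound $\n{f_n(x)}_n\ge C^{-1}\n{x}_n$ by scaling $x\mapsto\lambda x$ and letting $\lambda\to\infty$. The two arguments are of comparable length and both rest on the same mechanism (linearity absorbing an additive error under dilation), but yours has a small advantage: it sidesteps the question of whether $h(0)=0$, which the paper would need (or would need to arrange by translating $h$) before invoking Theorem \ref{thm-completely-coarse-implies-R-linear} on $h$. Your closing remark about applying Proposition \ref{Prop1} to $h$ on the $\R$-subspace $f(X)$ is exactly the caveat the paper records in its footnote, and your lower bound also yields that $f(X)$ is closed, i.e.\ an operator space, which is the last item the paper checks.
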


\begin{proof}
By the comments preceding the corollary, we only need to show that \eqref{Item1} implies \eqref{Item4}. For that, suppose  $f:X\to Y$ is a  complete coarse  embedding. Then, by Theorem \ref{thm-completely-coarse-implies-R-linear}, $f$ is $\R$-linear, hence $f$ is a completely bounded operator. In particular, $F=f(X)$ is a $\K$-linear subspace of $Y$. As $f$ is a complete coarse embedding, there is a completely coarse map $g:F\to X$  so that $f\circ g$ and $g\circ f$ are (completely) close to $\mathrm{Id}_{F}$ and $\mathrm{Id}_X$, respectively.

Notice that although Theorem \ref{thm-completely-coarse-implies-R-linear} is stated for $\K$-operator spaces, i.e., for \emph{complete} $\K$-linear subspaces of $B(H)$, completeness of the spaces are not used in its proof.\footnote{Notice that both Proposition \ref{Prop1} and Lemma \ref{lemma-functional-equation} hold for   $\R$-vector subspaces of $B(H)$.} Therefore,  Theorem \ref{thm-completely-coarse-implies-R-linear} implies that $g$ is also $\R$-linear,  so both $f\circ g$ and $g\circ f$ are  $\R$-linear. Then, $\R$-linearity and closeness of those maps to $\mathrm{Id}_{F}$ and $\mathrm{Id}_X$, respectively, imply that $f\circ g=\mathrm{Id}_{F}$ and $g\circ f=\mathrm{Id}_X$. This implies that $F$ is indeed a $\K$-operator space and that $f$ is a complete $\R$-isomorphic embedding.
\end{proof}

\section{A weaker notion of embeddability}
In this section, we introduce a notion of nonlinear embeddability between operator spaces which is strictly weaker than complete $\R$-isomorphic embeddability but which it is still strong enough  so that it  preserves some features of the $\R$-operator space structure --- and in some cases even the $\C$-operator space structure.

\begin{definition}\label{DefiAlmostCompCoarEmb}
Let $X$ and $Y$ be $\K$-operator spaces. We say that $X$ \emph{almost completely coarsely embeds into} $Y$ if there are functions $\omega,\rho:[0,\infty)\to [0,\infty)$ so that $\lim_{t\to\infty}\rho(t)=\infty$ and a sequence of maps $(f^n:X\to Y)_n$ so that 
\[\rho(\|[x_{ij}]-[y_{ij}]\|_n)\leq \|f^n([x_{ij}])-f^n([y_{ij}])\|_n\leq \omega(\|[x_{ij}]-[y_{ij}]\|_n)\]
for all $n\in\N$ and all $[x_{ij}],[y_{ij}]\in M_n(X)$. If the maps $(f^n)_n$ are $\K$-linear, we say that $X$ \emph{almost completely $\K$-isomorphically embeds into }Y.
\end{definition}

\begin{theorem}\label{Prop62}
There are $\C$-operator spaces $X$ and $Y$ so that $X$ almost completely $\C$-isomorphically embeds into $Y$, but so that  $X$ does not completely $\R$-isomorphically embed into $Y$.
\end{theorem}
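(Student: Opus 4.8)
We need $\C$-operator spaces $X$ and $Y$ where $X$ almost completely $\C$-isomorphically embeds into $Y$ (so there's a *sequence* of $\C$-linear maps $f^n$, each controlling the $n$-th amplification's distortion) but $X$ does NOT completely $\R$-isomorphically embed into $Y$ (no *single* map works uniformly across all amplifications, even allowing $\R$-linearity).

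The key insight is that "almost complete" embeddability uses a DIFFERENT map $f^n$ for each level $n$, whereas genuine complete embeddability requires ONE map $f$ whose ALL amplifications are simultaneously controlled. This gap — a sequence of level-by-level embeddings vs. a single uniform one — is what we must exploit.

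Let me think about what makes these genuinely different...\textbf{Understanding what must be separated.} The crucial distinction is that almost complete coarse embeddability permits a \emph{different} map $f^n$ for each amplification level $n$, with $f^n$ only required to control distortion of its \emph{own} $n$-th amplification $(f^n)_n$, whereas complete $\R$-isomorphic embeddability demands a \emph{single} map whose \emph{entire sequence} of amplifications is simultaneously controlled. The plan is to build $X$ and $Y$ so that each finite "piece" of $X$ embeds completely (which yields the family $(f^n)_n$), yet no single map can simultaneously respect all scales --- a phenomenon typically witnessed by spaces whose finite-dimensional structure is uniformly nice but whose asymptotic/infinite behavior is incompatible.

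\textbf{Proposed construction.} I would take $Y$ to be an $\ell_1$- or $c_0$-type direct sum $Y = \big(\bigoplus_{n} Y_n\big)$ of finite-dimensional operator spaces, chosen so that $M_n(X)$ embeds completely isometrically (or nearly so) into $M_n(Y_n) \subseteq M_n(Y)$ for each fixed $n$ via a $\C$-linear map $f^n$, but where the modulus functions $\rho, \omega$ control all these maps \emph{uniformly in $n$}. The point is that for almost complete embeddability we only need $(f^n)_n$ (the $n$-th amplification of the $n$-th map) to be bi-Lipschitz-type with uniform moduli, so it suffices that the $n$-th matrix level of $X$ sits nicely in $Y$ for each $n$. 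Meanwhile, $X$ should be chosen so that a genuine complete $\R$-isomorphic embedding into $Y$ would force a single operator realizing \emph{all} levels at once, which the direct-sum structure of $Y$ obstructs --- for instance because the completely bounded norm of any candidate single map would blow up, or because such an embedding would imply a finite-dimensional decomposition property that $X$ fails. A natural candidate pairing is to let $X$ be an operator space whose $n$-th amplification requires "$n$ coordinates worth" of room, so that a single map needs infinitely many coordinates coherently, contradicting the structure that makes each $f^n$ work in isolation.

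\textbf{The two halves of the proof.} For the positive direction (almost $\C$-isomorphic embeddability), I would exhibit the explicit $\C$-linear maps $f^n$ and verify the sandwich inequality $\rho(\|[x_{ij}]-[y_{ij}]\|_n) \le \|f^n([x_{ij}])-f^n([y_{ij}])\|_n \le \omega(\|[x_{ij}]-[y_{ij}]\|_n)$ from Definition \ref{DefiAlmostCompCoarEmb}, with $\rho$ linear-growth (giving $\lim_{t\to\infty}\rho(t)=\infty$) and $\omega$ linear. Since the maps are $\C$-linear the inequalities reduce to a two-sided operator-norm estimate $\|(f^n)_n\| \le B$ and $\|((f^n)_n)^{-1}\| \le B$ on the range, uniform in $n$. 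For the negative direction, I would argue by contradiction: assume a single completely bounded $\R$-linear $g : X \to Y$ with completely bounded inverse on its range. I would then extract, from the infinite structure of $X$, a sequence of finite test configurations (likely built from Hadamard matrices as in Proposition \ref{prop-abs-value}, since these efficiently force operator-norm growth) witnessing that $\sup_n \|g_n\|$ or $\sup_n \|(g^{-1})_n\|$ must be infinite, contradicting complete boundedness.

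\textbf{Main obstacle.} The hard part is the negative direction: ruling out \emph{every} $\R$-linear complete embedding, not merely the obvious $\C$-linear ones. Because $\R$-isomorphic embeddings are far more flexible than $\C$-linear ones (as the cited results of Bourgain and Ferenczi emphasize), I must ensure the obstruction is \emph{real}-linear in nature --- i.e. depends only on the underlying $\R$-operator space structure and the uniform-across-all-levels requirement, not on $\C$-linearity. The delicate point is choosing $X$ and $Y$ so that the level-by-level embeddings genuinely fail to assemble into a single bounded-distortion map; this typically requires a quantitative incompatibility (e.g. the complete embedding constant of $M_n(X)$ into $Y$ is uniformly bounded, but any single map realizing all levels forces the constants to accumulate). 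Pinning down such a pair and proving the accumulation rigorously is the technical heart of the argument.
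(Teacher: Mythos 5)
Your proposal correctly guesses the architecture of the construction --- take $Y$ to be a direct sum $\bigoplus_n Y_n$ where each $Y_n$ accommodates the $n$-th matrix level of $X$ (nearly) isometrically, and let $f^n$ be the inclusion into the $n$-th summand --- and this is indeed the shape of the paper's proof. But the proposal stops at the level of a plan: no specific $X$ and $Y_n$ are produced, and producing them is the actual mathematical content. The paper's choice is $X=R$ (the row Hilbertian operator space) and $Y_n=\mathrm{MIN}_n(R)$, the Oikhberg--Ricard spaces: the identity $R\to\mathrm{MIN}_n(R)$ is isometric at matrix levels $k\le n$ (giving $\rho(t)=\omega(t)=t$ for the positive half), while each $\mathrm{MIN}_n(R)$ is completely isomorphic to $\mathrm{MIN}(R)$. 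Without such a pair in hand, the sandwich inequality in the positive direction cannot be verified, and the claim that the levels ``fail to assemble'' remains an aspiration rather than an argument.

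The negative direction is where the gap is most serious. Your plan --- assume a single $\R$-linear complete embedding $g$ and use Hadamard-matrix test configurations to force $\sup_n\|g_n\|=\infty$ --- is not the mechanism that works here; Hadamard matrices detect nonlinearity (as in Proposition \ref{prop-abs-value}), not the incompatibility of two operator space structures on the same Banach space. The paper instead argues: if $u:R\to\bigoplus_n\mathrm{MIN}_n(R)$ were a complete $\R$-isomorphic embedding, then either some coordinate $p_nu$ fails to be completely strictly singular --- in which case an infinite-dimensional subspace of $R$, hence (by homogeneity of $R$) $R$ itself, embeds completely $\R$-isomorphically into $\mathrm{MIN}(R)$, which is absurd because every bounded $\R$-linear map into a minimal operator space is automatically completely bounded while $R$ admits bounded maps that are not --- or else every $p_nu$ is completely strictly singular, and a sliding-hump argument across the direct sum yields a contradiction. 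The three ingredients you are missing are precisely: the homogeneity of $R$ (to upgrade ``some infinite-dimensional subspace embeds'' to ``$R$ embeds''), the automatic complete boundedness of $\R$-linear bounded maps into $\mathrm{MIN}(E)$ (which is what makes the obstruction genuinely real-linear, the point you correctly flagged as delicate), and the sliding-hump step to handle the infinite direct sum. None of these is suggested by the proposal, so as written it does not constitute a proof.
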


In the proof below, we use the minimal operator space structure of a Banach space. Precisely, if $E$ is a $\K$-Banach space, there is a canonical $\K$-linear isometric embedding of $E$ into the $\mathrm{C}^*$-algebra of continuous functions on $(B_{E^*},\sigma(E^*,E))$. This inclusion induces an operator space structure on $E$ and we call this operator space $\mathrm{MIN}(E)$. If   $F$ is a $\K$-operator space and $u:F\to \mathrm{MIN}(E)$ is a $\K$-linear bounded map, then $u$ is completely bounded, moreover, $\|u\|_{\mathrm{cb}}=\|u\|$ (see \cite[Subsection 1.2.21]{BlecherLeMerdy2004}). Furthermore, if $u:F\to \mathrm{MIN}(E)$ is a $\R$-linear bounded map, then analogous arguments show that $u$ is also completely bounded, in fact $\|u\|_{\mathrm{cb}}\leq 4\|u\|$. 

\begin{proof}[Proof of Theorem \ref{Prop62}]
Let $\ell_2(\C)$ be the $\C$-Hilbert space of square summable functions $\N\to \C$ and let $R\subset B(\ell_2(\C))$ be the Hilbertian row $\C$-operator space, i.e.,  \[R=\{a\in B(\ell_2(\C))\mid \forall m\neq 1, \langle ae_n,e_m\rangle=0\},\]
where $(e_n)_n$ is the standard unit basis of $\ell_2(\C)$. For each $n\in\N$, let $\mathrm{MIN}_n(R)$ be the $\C$-operator space considered in \cite[Section 2]{OikhbergRicard2004MathAnn}. Precisely, $\mathrm{MIN}_n(R)$ is the $\C$-Banach space $R$ with the following $\C$-operator space structure: for all $m\in\N$ and all $x\in M_m(\mathrm{MIN}(R))$  we have  
\[\|x\|_{M_m(\mathrm{MIN}(R))}=\sup\Big\{\|(\mathrm{Id}_{M_m}\otimes u)x\|_{{mn}}\mid u\in\mathrm{CB}(R,M_n(\C)), \|u\|_{\mathrm{cb}}\leq 1\Big\}.\]
Then, the identity $R\to \mathrm{MIN}_n(R)$ induces $\C$-linear isometries $M_k(R)\to M_k(\mathrm{MIN}_n(R))$ for all $k\leq n$ (see \cite[Lemma 2.1]{OikhbergRicard2004MathAnn}). 

Notice that $\mathrm{MIN}_n(R)$ is completely $\C$-isomorphic to the $\C$-operator space $\mathrm{MIN}(R)$ for all $n\in\N$. Indeed, let $\mathrm{I}:\mathrm{MIN}(R)\to \mathrm{MIN}_n(R)$ be the identity. Then, by the properties of $\mathrm{MIN}(R)$ mentioned above,  $\mathrm{I}^{-1}$ is completely bounded. Moreover,   $\mathrm{I}$ is also completely bounded by \cite[Proposition 2.2]{OikhbergRicard2004MathAnn}.

Let \[Y=\bigoplus_{n\in\N} \mathrm{MIN}_n(R)\] be the $\infty$-direct sum operator space (see \cite[1.2.17]{BlecherLeMerdy2004}). Clearly, $R$ almost completely $\C$-isomorphically  embeds into $Y$. 	

We are left to notice that $R$ does not completely $\R$-isomorphically embed into $Y$. Suppose otherwise and let  $u:R\to Y$ be such embedding. For each $n\in\N$, let $p_n:Y\to \mathrm{MIN}_n(R)$ be the canonical projection. If there exists $n\in\N$ so that  $p_nu:R\to \mathrm{MIN}_n(R)$ is not completely strictly singular,\footnote{An $\R$-linear map $u:X\to Y$ between $\C$-operator space is \emph{completely strictly singular} if its restriction to any infinite dimensional $\C$-vector subspace of $X$ is not a complete $\R$-linear isomorphic embedding.} then there is an infinite dimensional $\C$-vector subspace of $R$ which  completely $\R$-isomorphically embeds into $\mathrm{MIN}(R)$. As $R$ is a  homogeneous $\C$-operator space (see \cite[Section 9.2]{Pisier-OS-book} and \cite[Lemma 2.5]{OikhbergRicard2004MathAnn}),   \cite[Proposition 9.2.1]{PisierOH1996} implies that all $\C$-vector subspaces of $R$   are  completely $\C$-linearly isometric to $R$. So we conclude that   $R$ completely $\R$-linearly isomorphically embeds  into $\mathrm{MIN}(R)$. By the comments preceding this proof, this implies that  every $\R$-linear bounded map $E\to R$ is completely bounded; contradiction.

Therefore,   $p_nu$ is completely strictly singular for all $n\in\N$. Since $p_nu$ is completely strictly singular for all $n\in\N$, a simple sliding hump argument gives us a contradiction to the complete  $\R$-linear embeddability of $R$ into $Y$, we leave the details to the reader.
\end{proof}

\begin{question}\label{Question1}
Is almost complete coarse embeddability strickly weaker than almost complete $\R$-isomorphic embedability? I.e., are there $\K$-operator spaces $X$ and $Y$ so that $X$ almost completely coarsely embeds into $Y$, but $X$ does not almost completely $\R$-isomorphically embeds into $Y$?
\end{question}

The next result shows that  this weaker form of complete embeddability
 is already strong enough so that the $\R$-operator space local structure of $Y$ passes to $X$. Given a $\K$-operator space $Y$ and an ultrafilter $\U$ on $\N$, we let $Y^\N/\U$ be the $\K$-operator ultraproduct space (see \cite[Section 1.2.31]{BlecherLeMerdy2004}).

\begin{proposition}\label{PropAlmostComplCoarseEmbImpliesRlinear}
If a $\K$-operator space $X$ almost completely coarsely embeds into an $\K$-operator space $Y$, then $X$ completely $\R$-isomorphically  embeds into $Y^\N/\U$ for any nonprincipal ultrafilter $\U$ on $\N$.
\end{proposition}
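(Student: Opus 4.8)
The plan is to weld the sequence $(f^n)_n$ into a single map $F:X\to Y^\N/\U$, show that $F$ is completely coarse, invoke Theorem~\ref{thm-completely-coarse-implies-R-linear} to conclude that $F$ is $\R$-linear, and finally read off from the two-sided estimate that $F$ is a complete $\R$-isomorphic embedding. First I would normalize: replacing each $f^n$ by $f^n-f^n(0)$ affects neither the hypotheses nor $\rho,\omega$, so assume $f^n(0)=0$ for all $n$. Testing the upper estimate on the single-entry matrix $x\otimes E_{11}\in M_n(X)$ (whose norm is $\n{x}$, and whose image under $f^n_n$ is $f^n(x)\otimes E_{11}$ because $f^n(0)=0$) gives $\n{f^n(x)}\le\omega(\n{x})$ for every $n$. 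Hence $(f^n(x))_n$ is a bounded sequence and $F(x):=[(f^n(x))_n]_{\U}$ is a well-defined element of $Y^\N/\U$, with $F(0)=0$. Using the isometric identification $M_k(Y^\N/\U)=(M_k(Y))^\N/\U$ of operator space ultraproducts \cite[Section 1.2.31]{BlecherLeMerdy2004}, the $k$-th amplification of $F$ is $F_k([x_{ij}])=[(f^n_k([x_{ij}]))_n]_{\U}$.

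The crux is to upgrade the ``level-matched'' control — the estimate at level $n$ only constrains $f^n_n$ — into control of $F_k$ at each fixed level $k$. For $n\ge k$ I would use the corner embedding $M_k(X)\hookrightarrow M_n(X)$, $a\mapsto a\oplus 0_{n-k}$, which satisfies $\n{a\oplus 0_{n-k}}_n=\n{a}_k$, and which (again because $f^n(0)=0$) obeys the identity $f^n_n(a\oplus 0_{n-k})=f^n_k(a)\oplus 0_{n-k}$. Feeding $a\oplus 0$ and $b\oplus 0$ into the defining inequalities of almost complete coarse embeddability then yields, for all $n\ge k$ and all $a,b\in M_k(X)$,
\[
\rho(\n{a-b}_k)\le \n{f^n_k(a)-f^n_k(b)}_k\le \omega(\n{a-b}_k).
\]
Since $\U$ is nonprincipal, $\{n:n\ge k\}\in\U$, so taking the ultralimit over $n$ and using the ultraproduct norm formula transfers the same two-sided bound to $F_k$:
\[
\rho(\n{a-b}_k)\le \n{F_k(a)-F_k(b)}_k\le \omega(\n{a-b}_k)\qquad\text{for all }k\in\N,\ a,b\in M_k(X).
\]

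From the right-hand estimate (taking $\omega$ nondecreasing, which is harmless) one gets $\Delta^{\cb}_F(t)\le\omega(t)<\infty$ for all $t$, so $F$ is completely coarse; by Proposition~\ref{Prop1} this is equivalent to an affine bound, whence Theorem~\ref{thm-completely-coarse-implies-R-linear} forces $F$ to be $\R$-linear. Once $F$ is $\R$-linear each $F_k$ is $\R$-homogeneous, and I would play this against both sides of the estimate. For the upper side, writing a nonzero $c\in M_k(X)$ as $\n{c}_k\cdot (c/\n{c}_k)$ gives $\n{F_k(c)}_k=\n{c}_k\,\n{F_k(c/\n{c}_k)}_k\le\omega(1)\,\n{c}_k$, so $\n{F}_{\cb}\le\omega(1)$. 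For the lower side, fix $t_0$ with $\rho(t_0)\ge 1$; applying the lower bound to $t_0\cdot(c/\n{c}_k)$ and using homogeneity gives $t_0^{-1}\n{c}_k\le \n{F_k(c)}_k$. These two inequalities say exactly that $F$ is a complete $\R$-isomorphic embedding of $X$ into $Y^\N/\U$.

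The main obstacle is precisely the passage carried out in the second paragraph: converting a family $(f^n)_n$ that is only controlled ``on the diagonal'' $n\leftrightarrow n$ into a single map enjoying uniform control across all amplification levels simultaneously. The corner-padding identity $f^n_n(a\oplus 0)=f^n_k(a)\oplus 0$, which lets the level-$n$ estimate dominate every level $k\le n$, together with the nonprincipality of $\U$ to discard the finitely many bad indices $n<k$, is what resolves it; everything after that is routine homogeneity bookkeeping feeding into the already-proved linearity theorem.
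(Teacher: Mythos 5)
Your proposal is correct and follows essentially the same route as the paper: normalize $f^n(0)=0$, define $F(x)=[(f^n(x))_n]_\U$, transfer the level-$n$ estimates down to level $k\le n$ via the corner isometry $M_k(X)\hookrightarrow M_n(X)$, use nonprincipality of $\U$ to pass to the ultralimit, invoke Theorem~\ref{thm-completely-coarse-implies-R-linear} for $\R$-linearity, and then read off $\n{F}_{\cb}\le\omega(1)$ and the lower bound from homogeneity. Your write-up is in fact slightly more explicit than the paper's (the padding identity $f^n_n(a\oplus 0)=f^n_k(a)\oplus 0$ and the derivation of $\n{F^{-1}}_{\cb}\le t_0$ from $\rho(t_0)\ge 1$ are only implicit there), but the argument is the same.
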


\begin{proof}
Let $(f^n)_n$ be a sequence which witnesses that $X$ almost completely coarsely embeds into  $Y$ and let $\U$ be a nonprincipal ultrafilter on $\N$. Without loss of generality, assume that $f^n(0)=0$ for all $n\in\N$. Define $F:X\to Y^\N/\U$ by letting $F(x)=[(f^n(x))_n]$ for all $x\in X$. Since $\|f^n(x)\|\leq \omega(\|x\|)$ for all $x\in X$, this map is well defined. 

Given $k\in\N$ and $[x_{ij}],[y_{ij}]\in M_k(X)$, for any $n\geq k$, denote by $[\bar x_{ij}],[\bar y_{ij}]$ the image of $[x_{ij}]$ and $[y_{ij}]$ under the canonical isometry $M_k(X)\to M_n(X)$. Then we have that 
\begin{align*}
\|f^n([x_{ij}])-f^n([y_{ij}])\|_k&=\|f^n([\bar x_{ij}])-f^n([\bar y_{ij}])\|_n\\
&\leq \omega(\|[\bar x_{ij}]-[\bar y_{ij}]\|_n)\\
&=\omega(\|[x_{ij}]-[y_{ij}]\|_k).
\end{align*}
Since $\U$ is nonprincipal, this implies that 
\[\|F([x_{ij}])-F([y_{ij}])\|_k \leq \omega(\|[x_{ij}]-[y_{ij}]\|_k)\]
for all $k\in \N$ and all $[x_{ij}],[y_{ij}]\in M_k(X)$. I.e.,   $F$ is completely coarse and Theorem \ref{thm-completely-coarse-implies-R-linear} implies that $F$ is $\R$-linear. Clearly, $\|F\|_{\mathrm{cb}}\leq \omega(1)$. 

Similarly as in the inequalities above, the nonprincipality of $\U$ gives that  
\[\|F([x_{ij}])-F([y_{ij}])\|_k\geq 
\rho(\|[x_{ij}]-[y_{ij}]\|_k)\]
for all $k\in \N$, and all $[x_{ij}],[y_{ij}]\in M_k(X)$. As $\lim_{t\to\infty}\rho(t)=\infty$, this implies that $F$ is injective. Moreover, $\lim_{t\to\infty}\rho(t)=\infty$ also implies that $F^{-1}$ is completely bounded. Hence, $F$ is a complete $\R$-isomorphic embedding.
\end{proof}

 \subsection{Application to $\OH$}
We now show that almost complete coarse embeddability into  Pisier's operator Hilbert space implies complete $\C$-isomorphic embeddability (see Theorem \ref{CorOH}). But first,   we  recall  the complexification of a real operator space. Given an $\R$-vector  space  $X$, we define the  \emph{complexification of $X$}, denoted by $X_\C$, as the  direct sum \[X_\C=X\oplus i X =\{x+iy\mid x,y\in X\},\]
where 
\[(x_1+iy_1)+(x_2+iy_2)=(x_1+x_2)+i(y_1+y_2)\]
for all $x_1,x_2,y_1,y_2\in X$,  and
\[(\alpha+i\beta)(x+iy)=(\alpha x-\beta y)+i(\beta x+\alpha y)\]
for all $\alpha,\beta\in \R$ and all $x,y\in X$. So $X_\C$ is a $\C$-vector space.

If $H$ is an $\R$-Hilbert space with inner product $\langle\cdot,\cdot\rangle$ and norm $\|\cdot\|$ given by this inner product, we define an inner product (which we still denote by  $\langle\cdot,\cdot\rangle$) on $H_\C$ by letting 
\[\langle x_1+iy_1,x_2+iy_2\rangle=\langle x_1,x_2\rangle
+\langle y_1,y_2\rangle+i\langle y_1,x_2\rangle-\langle x_1,y_2\rangle\]
for all $x_1,x_2,y_1,y_2\in H$. This inner product gives a norm $\|\cdot\|_\C$ on $H_\C$ so that 
\[\|x+i0\|_\C=\|x\|\ \text{ and } \ \|x+iy\|_\C=\|x-iy\|_\C\]
for all $x,y\in H$.  It is easy to see that 
\[B(H_\C)= B(H)\oplus i B(H)= B(H)_\C\]
and this gives us a natural $\C$-operator space structure on $ B(H)_\C$.

We conclude that   if $ X$ is an $\R$-operator space in $  B(H)$, then $  X_\C$ has a canonical $\C$-operator space structure inherited by the inclusion $  X_\C\subset  B(H)_\C=B(H_\C)$.  We refer the reader to \cite[Section 2]{Ruan2003IllinoisJourMath} for more details on this complexification.

Recall, given $\K$-operator spaces $X$ and $Y$, we write $X\oplus Y$ (resp. $X\oplus_1 Y$) to denote the $\ell_\infty$-sum (resp. $\ell_1$-sum) of $X$ and $Y$.  Given $p\in [1,\infty]$,  we write 
\[X\oplus_p Y=(X\oplus Y,X\oplus_1 Y)_{1/p},\]
where $(X\oplus Y,X\oplus_1 Y)_{1/p}$ is the complex interpolation space of $X\oplus Y$ and $X\oplus_1 Y$ (here we make the convention that $1/\infty=0$) --- see \cite[Section 2.7]{Pisier-OS-book} for the definition of  interpolation spaces.
 
Given a $\C$-Banach space $X$, $\overline{X}$ denotes the \emph{conjugate    of $X$}, i.e., $\overline {X}=X$ and the scalar multiplication on $\overline{X}$ is given by $\alpha x=\bar \alpha x$ for all $\alpha \in \C$ and all $x\in \overline{X}$. Then, given a $\C$-operator space $Y\subset B(H)$, $\overline{Y}$ denotes the \emph{conjugate operator space of $Y$}, i.e.,  $\overline{Y}=Y$ and the operator space structure on $Y$ is given by the canonical inclusion $\overline{Y}\subset \overline{B(H)}= B(\overline{H})$.

\begin{proposition}\label{PropYoplusYBar}
If a $\C$-operator space $X$ completely $\R$-isomorphically embeds into a $\C$-operator space $Y$, then $ X$ completely $\C$-isomorphically embeds into $Y \oplus_p \overline{Y}$ for any $p\in [1,\infty]$.
\end{proposition}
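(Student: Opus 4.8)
The plan is to exploit the standard decomposition of an $\R$-linear map between complex vector spaces into its $\C$-linear and $\C$-antilinear parts, and to route the antilinear part through the conjugate operator space $\overline{Y}$ so that the combined map becomes genuinely $\C$-linear. Concretely, let $u : X \to Y$ be a complete $\R$-isomorphic embedding and define
\[
u_1(x) = \tfrac{1}{2}\big( u(x) - i\,u(ix) \big), \qquad u_2(x) = \tfrac{1}{2}\big( u(x) + i\,u(ix) \big),
\]
so that $u = u_1 + u_2$. A direct check shows that $u_1$ is $\C$-linear and $u_2$ is $\C$-antilinear; hence, viewing $u_2$ as a map into $\overline{Y}$ turns it into a $\C$-linear map. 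I would then set $v(x) = (u_1(x), u_2(x)) \in Y \oplus_p \overline{Y}$ and argue that $v$ is a complete $\C$-isomorphic embedding.

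First I would verify that $v$ is $\C$-linear: the $\C$-antilinearity of $u_2$ as a map into $Y$ is precisely $\C$-linearity as a map into $\overline{Y}$, and this matches the scalar multiplication on the direct sum. For complete boundedness, the key observations are that multiplication by $i$ is a complete isometry on any $\K$-operator space (a unit-modulus scalar does not change norms inside a $\mathrm{C}^*$-algebra), and that the set-theoretic identity $Y \to \overline{Y}$ is a complete isometry of $\R$-operator spaces, since $M_n(\overline{Y})$ and $M_n(Y)$ carry the same norm. Thus $u_1$ and $u_2$ are completely bounded with $\n{u_1}_{\cb}, \n{u_2}_{\cb} \le \n{u}_{\cb}$, and on the $\oplus_\infty$ sum one immediately gets $\n{v}_{\cb} \le \n{u}_{\cb}$ from the block-diagonal identity $M_n(Y \oplus \overline{Y}) = M_n(Y) \oplus M_n(\overline{Y})$.

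The lower (embedding) estimate is where the two summands cooperate. Since $u_n = (u_1)_n + (u_2)_n$ at every matrix level and $M_n(\overline{Y})$ has the same norm as $M_n(Y)$, the triangle inequality gives $\n{u_n(\xi)} \le \n{(u_1)_n(\xi)} + \n{(u_2)_n(\xi)} \le 2\n{v_n(\xi)}$ for every $\xi \in M_n(X)$, where $v$ is computed in the $\oplus_\infty$ sum. Combined with the bound $\n{u_n(\xi)} \ge c\,\n{\xi}$ for some $c>0$ (which exists because $\n{u^{-1}}_{\cb}<\infty$), this yields $\n{v_n(\xi)} \ge \tfrac{c}{2}\n{\xi}$, so $v$ is a complete $\C$-isomorphic embedding of $X$ into $Y \oplus \overline{Y}$.

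Finally, to reach an arbitrary $p$, I would reduce to the case $p=\infty$ just handled by noting that for two summands the formal identity maps between $Y \oplus \overline{Y}$ and $Y \oplus_p \overline{Y}$ are completely bounded in both directions: the map from the $\oplus_1$ side is a complete contraction and its inverse has cb-norm at most $2$, and these estimates propagate through the interpolation scale $(Y\oplus\overline Y, Y\oplus_1\overline Y)_{1/p}$ defining $\oplus_p$. Composing $v$ with the resulting complete isomorphism produces the desired embedding into $Y \oplus_p \overline{Y}$. I expect the main technical care to lie precisely in this last step: one must check that the comparison of the $\oplus_p$ matrix norms is genuinely complete (uniform in $n$) rather than merely a Banach-space estimate, and confirm that the two-summand direct sums stay completely isomorphic across all $p$. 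The well-known failure of such comparisons for $\ell_p^n$-sums as the number of summands grows is not an obstacle here, since the number of summands is fixed at two.
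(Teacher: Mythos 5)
Your proof is correct and is essentially the paper's argument: the paper factors the very same map through the complexification $X_\C$ of $X$ viewed as a real operator space, and the composite it produces, $x \mapsto \big(f(x)-if(ix),\, f(x)+if(ix)\big)$, is exactly twice your $v$ built from the $\C$-linear/$\C$-antilinear decomposition. The only real difference is that you spell out the two-summand comparison between $Y\oplus\overline{Y}$ and $Y\oplus_p\overline{Y}$ via interpolation of the cb-bounded formal identities, a point the paper leaves implicit.
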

 
\begin{proof}
First notice that considering $X$ as an $\R$-operator space, we can look at its complexification $X_\C=X\oplus I X$ -- we use $I$ instead of $i$ here because $X$ is already a complex operator space. Then  the map 
\[x\in X\mapsto x+I(-ix)\in X_\C\]
is a complete $\C$-isomorphic embedding. Let $f:X\to Y$ be a complete $\R$-isomorphic embedding.  Since the map 
\[x+Iy\in X_\C\mapsto (f(x)+if(y),f(x)-if(y))\in Y\oplus_p \overline{Y}\]
is a complete $\C$-isomorphic embedding, we are done.
\end{proof} 
 
 We need one last result before proving Theorem \ref{CorOH}.  Given an index set $I$, let $\OH(I)$ be the operator Hilbert space introduced by  Pisier in 
  \cite[Theorem 1.1]{PisierOH1996}.
 
\begin{proposition}\label{PropOH}
Let $I$ be an infinite index set, then $\OH(I)$ is completely $\C$-linearly isometric to $\OH(I)\oplus_2 \overline{\OH(I)}$.
\end{proposition}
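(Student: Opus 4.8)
The plan is to reduce the statement to two structural properties of Pisier's operator Hilbert space together with an elementary cardinality fact, and then chain complete $\C$-linear isometries. Concretely, I would use that $\OH$ is \emph{self-conjugate}, i.e. $\overline{\OH(I)}$ is completely $\C$-linearly isometric to $\OH(I)$, and that $\OH$ is \emph{stable under $\ell_2$-direct sums}, i.e. $\OH(I)\oplus_2\OH(J)$ is completely $\C$-linearly isometric to $\OH(I\sqcup J)$. Granting these, the proof is immediate:
\[
\OH(I)\oplus_2\overline{\OH(I)}\cong \OH(I)\oplus_2\OH(I)\cong \OH(I\sqcup I)\cong\OH(I),
\]
where the last step uses that $|I\sqcup I|=|I|$ for infinite $I$, so $\ell_2(I\sqcup I)$ and $\ell_2(I)$ are $\C$-linearly isometric and $\OH$ depends only on the underlying Hilbert space up to complete isometry.

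First I would establish the self-conjugacy $\overline{\OH(I)}\cong\OH(I)$. This is a known property of $\OH$ (it is, for instance, a consequence of Pisier's self-duality theorem $\OH(I)^{*}\cong\overline{\OH(I)}$ together with $\OH(I)^{*}\cong\OH(I)$), but the self-contained route I would actually write uses the interpolation description $\OH(I)=(R(I),C(I))_{1/2}$ from \cite{PisierOH1996}, the fact that conjugation commutes with complex interpolation, $\overline{(X_0,X_1)_\theta}\cong(\overline{X_0},\overline{X_1})_\theta$, and the elementary identifications $\overline{R(I)}\cong R(I)$ and $\overline{C(I)}\cong C(I)$ realized through the canonical $\C$-linear unitary $\overline{\ell_2(I)}\to\ell_2(I)$. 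Together these give $\overline{\OH(I)}\cong(\overline{R(I)},\overline{C(I)})_{1/2}\cong(R(I),C(I))_{1/2}=\OH(I)$.

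Next I would record the $\ell_2$-stability of $\OH$, namely $\OH(I)\oplus_2\OH(J)\cong\OH(I\sqcup J)$ completely isometrically; this is one of the fundamental properties of $\OH$ established in \cite{PisierOH1996}, and in the present notation (where $\oplus_2$ is the interpolation sum $(\,\cdot\oplus\cdot,\cdot\oplus_1\cdot)_{1/2}$) it is exactly the statement that $\OH$ of an orthogonal Hilbert-space direct sum is the operator-space $\ell_2$-sum of the $\OH$-factors. Applying the self-conjugacy isometry in the second coordinate reduces $\OH(I)\oplus_2\overline{\OH(I)}$ to $\OH(I)\oplus_2\OH(I)$, and then $\ell_2$-stability followed by the cardinality identity $|I\sqcup I|=|I|$ finishes the argument.

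The main obstacle is not the chaining itself, which is formal, but the careful bookkeeping in the complex setting: one must ensure that $\overline{R(I)}\cong R(I)$, $\overline{C(I)}\cong C(I)$ and the interpolation step are genuine \emph{$\C$-linear} complete isometries (conjugation alone being only conjugate-linear); that operator-space complex interpolation is being invoked where its duality and functoriality statements hold completely isometrically; and that transporting the self-conjugacy isometry through $\oplus_2$ is legitimate. This last point is fine precisely because the self-conjugacy map is a completely isometric \emph{isomorphism}, so it and its inverse are complete contractions at both interpolation endpoints and therefore interpolate to a complete isometry. Once these standard facts about $\OH$, $R$, and $C$ are in hand, the proposition follows with no further computation.
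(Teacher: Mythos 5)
Your argument is correct and follows essentially the same route as the paper: both proofs first establish $\overline{\OH(I)}\cong\OH(I)$ by conjugating the interpolation pair $(R(I),C(I))$, and both then reduce the statement to identifying $\OH(I)\oplus_2\OH(I)$ with $\OH(I)$. The only difference is that where you cite the direct-sum stability $\OH(I)\oplus_2\OH(J)\cong\OH(I\sqcup J)$ together with $|I\sqcup I|=|I|$, the paper obtains the same identification by checking that $\OH(I)\oplus_2\OH(I)$ satisfies Pisier's self-duality property $\overline{X^*}\cong X$ and invoking the uniqueness characterization of $\OH$ --- which is exactly how the stability fact you quote is established in the first place.
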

 
 \begin{proof}
Let $\ell_2(I,\C)$ denote the $\C$-Hilbert space of all square summable functions $I\to \C$, and let $(e_i)_{i\in I}$ denote its standard basis. Let $ R(I)$ and $ C(I)$ be the row and column $\C$-operator spaces of $\ell_2(I,\C)$, respectively, i.e., fix $i_0\in I$ and let
\[C(I)=\{a\in B(\ell_2(I,\C))\mid \forall i\neq i_0,\ \langle ae_{i},e_j\rangle=0\};\]
$R(I)$ is defined similarly (cf. Proof of Theorem \ref{Prop62}). By \cite[Corollary 2.6]{PisierOH1996},  we have that 
\[\OH(I)=( R(I),  C(I))_{1/2},\]
where $( R(I), C(I))_{1/2}$ is the interpolation space of $ R(I)$ and $ C(I)$ (we refer the reader to \cite[Chapter 2]{PisierOH1996} for precise definitions). The map 
\[(\alpha_j+i\beta_j)_{j\in I}\in \ell_2(I)\mapsto (\alpha_j-i\beta_j)_{j\in I}\in \overline{\ell_2(I)}\]
defines a complete $\C$-linear isometry, and this induces complete $\C$-linear isometries between $ R(I) $ an $
\overline{ R(I)}$, and $C(I)$ and $\overline{ C(I)}$. Hence, 
we have that 
\[\overline{\OH(I)}=\overline{(R(I),C(I))_{1/2}}=(\overline{ R(I)},\overline{C(I)})_{1/2}=( R(I), C(I))_{1/2}=\OH(I)\]
completely $\C$-linearly isometric.

Now notice that 
\begin{align*}
\overline{(\OH(I)\oplus_2\OH(I))^*}&=\overline{\OH(I)^*\oplus_2\OH(I)^*}\\
&=\overline{\OH(I)^*}\oplus_2\overline{\OH(I)^*}\\
&=\OH(I)\oplus_2\OH(I)
\end{align*}
completely $\C$-linearly isometrically. Hence, by the uniqueness property of  $\OH(I)$, the conclusion of the proposition  follows.
 \end{proof}

\begin{proof}[Proof of Theorem \ref{CorOH}]
Let $\U$ be a nonprincipal filter on $\N$. By Proposition \ref{PropAlmostComplCoarseEmbImpliesRlinear}, the hypothesis imply that $X$ completely $\R$-isomorphically embeds into $\OH(I)^\N/\U$. By \cite[Lemma 3.1(ii)]{PisierOH1996}, $\OH(I)^\N/\U$ is completely $\C$-linearly isometric to $\OH(L)$ for some set $L$. Hence by Proposition \ref{PropYoplusYBar}, we have that $X$ completely $\C$-isomorphically embeds into $\OH(L)\oplus_2\overline{\OH(L)}$. The conclusion then follows from Proposition \ref{PropOH}
\end{proof}

\begin{acknowledgments}
The first named author would like to thank Thomas Sinclair for introducing him to operator spaces and fruitful conversations. The first named author is also thankful to Valentin Ferenczi for useful suggestions about complexifications of operator spaces.
\end{acknowledgments}

\providecommand{\bysame}{\leavevmode\hbox to3em{\hrulefill}\thinspace}
\providecommand{\MR}{\relax\ifhmode\unskip\space\fi MR }
\providecommand{\MRhref}[2]{%
  \href{http://www.ams.org/mathscinet-getitem?mr=#1}{#2}
}
\providecommand{\href}[2]{#2}

\end{document}